\newcommand{\tick}{\ding{51}}%
\newcommand{\cross}{\ding{55}}%
\newcommand{\R}{\mathbb{R}}
\newcommand{\N}{\mathbb{N}}
\newtheorem{theorem}{Theorem}
\newtheorem{lemma}{Lemma}
\newtheorem{corollary}{Corollary}
\newtheorem{definition}{Definition}
\newtheorem{example}{Example}
\newtheorem{problem}{Problem}
\newtheorem{remark}{Remark}
\title{ \bf
Stabilization of polynomial dynamical systems\\ using linear programming based on Bernstein polynomials}
\author{Mohamed Amin Ben~Sassi$^{1}$ and Sriram Sankaranarayanan$^{1}$
\thanks{$^{1}$M.A.Ben Sassi and  S. Sankaranarayanan are in the Department of Computer Science, University of Colorado, Boulder, CO, USA}
  }          
\begin{document}
\maketitle
\thispagestyle{empty}
\pagestyle{empty}

\begin{abstract}
In this paper, we deal with the problem of synthesizing static output
feedback controllers for stabilizing polynomial systems.  Our approach
jointly synthesizes a Lyapunov function and a static output feedback
controller that stabilizes the system over a given subset of the
state-space. Specifically, our approach is simultaneously targeted
towards two goals: (a) asymptotic Lyapunov stability of the system,
and (b) invariance of a box containing the equilibrium. Our approach
uses Bernstein polynomials to build a linear relaxation of polynomial
optimization problems, and the use of a so-called ``policy iteration''
approach to deal with bilinear optimization problems.  Our approach
can be naturally extended to synthesizing hybrid feedback control laws
through a combination of state-space decomposition and Bernstein
polynomials.  We demonstrate the effectiveness of our approach on a
series of numerical benchmark examples.

%
%{Bernstein Polynomials, Box invariant , Lyapunov Functions, Stability, Controller synthesis}
\end{abstract}
%%%%%%%%%%%%%%%%%%%%%%%%%%%%%%%

%%%%%%%%%%%%%%section A%%%%%%%%%
\section{Introduction}
The problem of designing stabilizing controllers for nonlinear
dynamical systems is of great importance. In this paper, we study the
problem of synthesizing static output feedback controllers for
polynomial systems by solving a polynomial optimization problem to
directly obtain the controller along with the associated Lyapunov
functions that yields the proof of stability. 

Our approach inputs the description of a polynomial system and a
desired region $R$ to be stabilized. It then proceeds to find a static
output feedback control law and an associated Lyapunov function to
ensure local stability in $R$. Simultaneously, we ensure that the
region $R$ is an invariant of the resulting closed loop system. Our
approach assumes a given structure for the feedback as a polynomial
function of the outputs of the system.  Furthermore, we assume a
polynomial template form for the unknown Lyapunov function. We proceed
to encode the conditions for the Lyapunov function, obtaining a hard
polynomial optimization problem that involves the coefficients of the
Lyapunov functions and those of the feedback. 

The second part of the paper iteratively solves this optimization
problem through an iterative method variously called ``V-K''
iteration~\cite{Ghaoui1994} or policy iteration~\cite{Gaubert2007}.
The $i^{th}$ iteration of the approach selects a positive definite
polynomial $V_i$ and a feedback law $u_i$. Ideally, we require
${V_i'}$ to be negative definite inside the region $R$ for $V_i$ to
be a Lyapunov function guaranteeing asymptotic stability. Failing
this, we first search for a new positive definite polynomial $V_{i+1}$
whose Lie derivative ${V_{i+1}'}$ has a larger maxima inside $R$
\emph{fixing} $u_i$, and adjust to a new feedback law $u_{i+1}$ that
improves the maximal value of $V_{i+1}'$ inside $R$. Each
iteration is reduced to solving a Linear Programming (LP) problem
using Bernstein polynomials combined with a reformulation
linearization technique~\cite{Bensassi+Sriram}. It is well-known that
policy iteration  does not necessarily converge to a global minimum, in
general.  However, our evaluation over a wide variety of benchmark
examples shows that our approach is effective at converging to a
global minimum by discovering an appropriate feedback law $u^*$ and an
associated Lyapunov function $V^*$.

Automatic static output feedback design, or more generally, finding
feedback that satisfies given structural constraints is well-known to
be a hard problem in general.  In fact, static output feedback
stabilization of linear systems yields \emph{bilinear matrix
inequalities} (BMIs) rather than LMIs. A direct approach given by
Henrion et al.~\cite{Henrion2005} uses the characteristic polynomial of
the transfer function matrix, and derives constraints that ensure the
Hermite stability criterion for this matrix.  As a result, they obtain
a system of PMI (polynomial matrix inequalities), that is solved using a local
optimization solver (PENBMI). In contrast, an indirect approach
reduces the non convex BMIs to a series of convex LMIs. This was
proposed as the so-called $V-K$ iteration was proposed by El Ghaoui
and Balakrishnan~\cite{Ghaoui1994}. The approach iteratively solves a
bilinear problem by fixing one set of variables while modifying the
other to result in a decrease in the objective values. The iteration
alternates between the two sets of variables, until reaching a
feasible solution. Our goal is to use this technique for polynomial
systems while replacing BMI and LMI with linear and bilinear programs
that can be solved more efficiently. A similar idea for solving
bilinear problems appears in the work of Gaubert et
al.~\cite{Gaubert2007}, for finding invariants for discrete-time
systems. Therein, the idea is called \emph{policy iteration}.  In this
work, we will call our approach \emph{policy iteration}, as well.  The
main differences between our work and that of El Gahoui et al. lie in
our focus on polynomial systems, yielding more general polynomial
optimization problems that involve the ``V'' variables relating to the
Lyapunov function and the ``K'' variables relating to the feedback.
Yet, by using policy iteration, we can separately focus on problems
with a single set of variables at a time and use linear programming
relaxations through a combination of Bernstein polynomials and
reformulation linearization, discussed in our earlier
work~\cite{Bensassi+Sriram}.

 Existing approaches to stabilizing polynomial systems rely on
linearization around the equilibrium. However, linearization can
sometimes fail to be controllable, or yield region of stability that
is much smaller than desired. Furthermore, the output feedback
stabilization for a linear system (or finding a feedback law
satisfying a given structure) yields non-convex problems that are no
easier to solve.  Another class of methods (more related to our work) consists on
reducing the problem to a set of LMIs or Sum-Of-Squares (SOS)
formulations (see~\cite{Zhao2009,Nguang2011} and references
therein). In~\cite{Nguang2011}, an iterative SOS approach is
proposed. This approach uses the Schur complement to produce a set of
BMIs relaxed to an SOS problem. More precisely, an additional design
nonlinear term $\epsilon(x)$ is introduced, and causes bilinearity. An
iterative approach is then obtained by fixing a guess for
$\epsilon(x)$ and iteratively updating it until feasibility is
obtained. Once again, the major problem arises from the fact that the
Lyapunov function and a static output feedback are needed
simultaneously. Other approaches to controlling polynomial systems
include the use of nonlinear optimal control techniques, feedback
linearization, backstepping, and exact linearization. However, these
techniques rely on the system being of a certain form and mostly
involve state-feedback.  A detailed comparison of the relative
advantages of the direct approach presented here with other approaches
to nonlinear stabilization will form an important part of our future
work. 
\section{Problems formulation and polynomial optimization problems}\label{sec:problem}
\subsection{Problem formulation}
%\section{Problem Formulation}

\begin{figure}[t]
\begin{center}
\begin{tikzpicture}
\matrix[every node/.style={rectangle, draw=black}, pnt/.style={minimum size=0pt, inner sep=0pt,circle,draw=white}, row sep=15pt, column sep=20pt]{
  \node[pnt](p0){}; & & \node(n0){$\dot x = f(x)+g(x) u$}; &  & \node[pnt](p1){}; \\
 \node[pnt](p3){};  &  & \node[fill=blue!20](n1){$u=H(y)\theta$}; &  & \node[pnt](p2){};\\
};
\path[->] (n0) edge node[above]{$y = h(x)$} (p1)
(p1) edge[-] (p2)
(p2) edge (n1)
(n1) edge (p3)
(p3) edge[-] (p0)
(p0) edge node[above]{$u$} (n0);

\end{tikzpicture}

\end{center}
\caption{ Overall structure of the controller synthesis problem considered.}\label{Fig:control-structure-dia}
\end{figure}
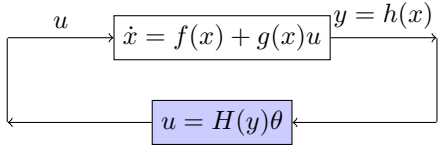
In this work, we consider a  nonlinear control-affine system subject to input constraints :
\begin{equation}
\label{eq:ode}
\left\{
\begin{array}{ll}
\dot x(t) = f(x(t))+g(x(t)) u(y(t)),\; u\in \mathcal U. \\
y(t)=h(x(t)).
\end{array}\right.
\end{equation}
wherein $x\in \reals^n$ represents the state variables, $u\in \mathcal
U$ represents the control inputs ranging over a compact set $U
\subseteq \R^p$, and $y\in \R^q$ are the outputs.

We assume that the functions $f:\R^n \rightarrow \R^n$, $h:\reals^n
\rightarrow \R^q$ and the control matrix $g:\R^n \times
\R^m\rightarrow \R^{(n \times p)}$ defining the dynamics of the system
are multivariate polynomial maps.  The set of inputs $\mathcal U$ is a
convex compact polytope: $\mathcal U=\left\{u \in \R^p |\;
  \alpha_{\mathcal U,k}\cdot u \le \beta_{\mathcal U,k} ,\; \forall
  k\in \mathcal K_{\mathcal U} \right\}$ where $\alpha_{\mathcal
  U,k}\in \R^p$, $\beta_{\mathcal U,k} \in \R$ and $\mathcal
K_{\mathcal U}$ is a finite set of indices.  Finally, we assume that
$x^*=0_n$ is an equilibrium for the system~\eqref{eq:ode}, i.e
$f(0_n)+g(0_n)u(h(0_n))=0_n$.

We define a region of interest  $R$ as a hyper-rectangle, $R:\
[\underline{x_1},\overline{x_1}] \times \dots \times
[\underline{x_n},\overline{x_n}]$ with
$\underline{x_k}<\overline{x_k}$ for all $k\in \{1,\dots,n\}$.

\paragraph{Stabilizing Feedback:} In this work, we assume that the
desired feedback is given by a function $u:  \R^q \rightarrow \scr{U}$ mapping
outputs $y$ to control inputs $u$ to yield a  closed-loop system 
\begin{equation}
\label{eq:ode1}
\dot x = f(x)+g(x) u(y),\ \ 
y = h(x)
\end{equation}

We require that the closed loop system~\eqref{eq:ode1} be
asymptotically stable in $R$.  This is achieved by ensuring two
important properties.
\begin{problem}[Existence of Local Lyapunov Function]\label{pb1}
  The system~\eqref{eq:ode1} has a local Lyapunov function $V(x)$ in
  the region $R$ such that
\begin{enumerate}
\item $V(x)$ is \emph{positive definite} over $R$, i.e, $V(x) > 0 $ for
  all $x \in R \setminus \{ 0_n \}$ and $V(0_n) = 0$.
\item $\frac{dV}{dt} =  \nabla V \cdot (f(x) + g(x) u(h(x)) )$ is negative definite over
$R$.
\end{enumerate}
\end{problem}
As such, a local Lyapunov function inside $R$ guarantees that the
system~\eqref{eq:ode1} is asymptotically stable in some neighborhood
$N$ of $0_n$, where $N \subseteq R$. Specifically, $N$ contains the largest
sublevel set of $V$ inside $R$ as the stability region, but does not have
to include $R$.  To ensure
that the system is stable inside all of $R$, we additionally require positive
invariance of $R$.
\begin{problem}[Positive Invariance of $R$]\label{pb2}
The system~\eqref{eq:ode1} is $R$-\emph{invariant}, iff all trajectories with $x(0)\in R$
satisfy $x(t)\in R$ for all $t\ge 0$.  
\end{problem}
Finding a feedback $u(y)$ that solves problems~\ref{pb1} and
~\ref{pb2} ensures asymptotic stability in the whole region $R$.

\paragraph{Feedback Structure}
Finally, we consider feedback functions that conform to a given fixed
structure. In other words, we consider feedback functions of the
following form
$$
u(y)=H(y)\cdot \theta=\scr{H}(x)\cdot \theta
$$
where $\theta \in \R^q$ is a set of \emph{gain parameters} to be
determined by the synthesis procedure, the matrix $H:\R^n \rightarrow
\R^{(p \times q)}$ is a given multivariate polynomial map that
specifies the controller structure. Often, $H$ is specified to
include all monomial terms up to a given degree. However, more complex
situations such as \emph{decentralized control} may involve choosing
specific structure for $H$. Figure~\ref{Fig:control-structure-dia}
depicts the structure of the controller schematically.

Let $\scr{H}(x): H(h(x))$ be the equivalent map as a function of the
state variables.  The input constraints (i.e. for all $x\in R$, $u
\in \mathcal{U}$) is then equivalent to
\begin{equation}
\label{eq:inputcons}
 \forall k\in \mathcal K_U,\; \forall x\in R,\; \alpha_{U,k}\cdot \mathcal{H}(x)\theta \le \beta_{U,k}.
\end{equation}
Let $O$ represent the values of $\theta$ that satisfy Eq.~\eqref{eq:inputcons}.
Under these assumptions, the dynamics of the controlled system~(\ref{eq:ode1}) can be rewritten under the form
$$
\dot x(t) = f(x(t))+G(x(t)) \theta ,
$$
where the matrix of polynomials $G(x)=g(x)\mathcal{H}(x)$, and $\theta \in O$.

\subsection{Reduction to polynomial optimization problems (POP)}\label{Sec:lp-Bernstein-relaxations}

The first step is to fix a template form  for the Lyapunov function $V$. We 
assume a polynomial form:
$$V=V_c(x)=\displaystyle{ \sum_{|\alpha| \le D} c_{\alpha} x^{\alpha}} \,,$$ 
where $\alpha \in \N^n$, $|\alpha|=\sum_{i} \alpha_i$,
$c:\ (c_{\alpha})_{|\alpha| \le D}$ are the unknown coefficients of the
Lyapunov function and $D \in \N$ is the maximal degree.

We now focus on solving Problem~\ref{pb1}.
For a relatively small $\epsilon > 0$, this problem can be formulated as follows:
\begin{enumerate}
\item Find a feasible set $C$ s.t 
\[ C:\{ c\ |\ \displaystyle{\min_{x\in R} V_c(x)-\epsilon ||x||^2 \ge 0 }\}\]
\item Find feasible sets $c \in C'$ and $\theta \in O'$ s.t forall $c \in C'$ and
$\theta \in O'$,
\[ \displaystyle{\min_{x\in R} -\nabla V_c \cdot (f+G\theta)-\epsilon ||x||^2 \ge 0} \]

\end{enumerate}
Recall the set $O$ from Eq.~\eqref{eq:inputcons}.
\begin{theorem}
If $C\bigcap C' \neq \emptyset $ and $ O\bigcap O' \neq \emptyset $, then each $c^*\in C\bigcap C' $ and $\theta^*\in O\bigcap O' $
solves the local Lyapunov function existence problem (Problem~\ref{pb1}).
\end{theorem}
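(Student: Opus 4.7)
The plan is to directly unpack the four set definitions and check that their intersections encode, term-for-term, the two clauses of Problem~\ref{pb1} together with the input-admissibility requirement. The key observation is that the $\epsilon\|x\|^2$ margin in each defining inequality is exactly what converts a non-strict pointwise bound on $R$ into the \emph{strict} positive or negative definiteness demanded by Problem~\ref{pb1}; modulo this margin, the proof is definitional.

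First I would dispatch clause~(1). From $c^* \in C$ the defining minimum is non-negative, so $V_{c^*}(x) \ge \epsilon \|x\|^2$ for every $x\in R$. Since $\epsilon>0$ and $\|x\|^2>0$ on $R\setminus\{0_n\}$, this yields $V_{c^*}(x)>0$ on $R\setminus\{0_n\}$, which together with the template normalization $V_{c^*}(0_n)=0$ is precisely clause~(1).

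Next I would dispatch clause~(2) using $c^*\in C'$ and $\theta^*\in O'$. Their joint defining inequality gives $\nabla V_{c^*}(x)\cdot (f(x)+G(x)\theta^*)\le -\epsilon\|x\|^2$ on $R$, and the identity $G(x)\theta^*=g(x)\mathcal{H}(x)\theta^*=g(x)u(h(x))$ established at the end of Section~\ref{sec:problem} identifies the left-hand side with $dV/dt$ along the closed-loop flow. Hence $dV/dt\le -\epsilon\|x\|^2<0$ on $R\setminus\{0_n\}$, which is clause~(2). The auxiliary membership $\theta^*\in O$ is used only to ensure that the synthesized feedback $u(y)=\mathcal{H}(x)\theta^*$ takes values in $\mathcal{U}$ throughout $R$, so the controller is admissible.

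The only subtlety, and not really an obstacle, is the equality $V_{c^*}(0_n)=0$: the inequality defining $C$ only forces $V_{c^*}(0_n)\ge 0$ at the equilibrium, so this equality must either be built into the template (by excluding a constant term) or enforced as an extra linear constraint on the coefficients $c$. In either case the rest of the argument is untouched, and no machinery beyond the positivity of $\epsilon$ and the definitional identity $G=g\mathcal{H}$ is required.
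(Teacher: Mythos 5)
Your proof is correct and follows the same route as the paper's: the paper's own proof is a one-line assertion that the defining inequalities of $C$, $C'$, and $O$ respectively give positive definiteness of $V_c$, negative definiteness of $\frac{dV}{dt}$, and admissibility of the controller, which is exactly the definitional unpacking you carry out in detail. Your added observation that $V_{c^*}(0_n)=0$ must be enforced by the template (excluding the constant term) is a legitimate point the paper glosses over, but it does not change the argument.
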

\begin{proof}
It is easy to see that the first condition will imply that $V_c$ will be positive definite, the second one implies that its derivatives $\frac{dV}{dt}$
is negative definite. The last condition implies that the controller is admissible i.e $u\in \mathcal U$.
\end{proof}
%For the invariance problem, we will need the following notations~\cite{Belta06} concerning facets of a given rectangle $R_n=\prod_{k=1}^{k=n}[a_k,b_k]$ and its corresponding outer normals:
%\begin{itemize}
%\item $\xi_k:\{a_k,b_k\}\mapsto\{0,1\} $ when for all $k\in\{1,\ldots,n\}$, $\xi_k(a_k)=0$ and $\xi_k(b_k)=1$. 
%\item$F_{j,\xi_j(w_j)}=\{x\in R_n\; | \;x_j=w_j\}$: the set of facets of $R_n$
% where for all $j\in\{1,\ldots,n\}$, $w_j\in\{a_j,b_j\}$. 
%\item$n_{j,\xi_j(w_j)}=(-1)^{(\xi_j(w_j)+1)}e_j$: the outer normal of the facet $F_{j,\xi_j(w_j)}$ where 
%the vectors $e_j$ form the canonical basis of $R^n$.
%\end{itemize}
%In our case $R_n=R=[-1,1]^n$. 
To solve the invariance problem (Problem~\ref{pb2}), we should find a controller (i.e a coefficient vector $\theta$) ensuring that all the facets of the rectangle $R$ are blocked.
\begin{definition}[Blocked Facets]
A facet $F$ of the hyper-rectangle $R$ is said to be \emph{blocked} for the system~(\ref{eq:ode1}) if and only if 
\[  \forall x \in F ,\; n_F.(f(x)+G(x)\theta) < 0\,,\]
 where $n_F$ is its outer normal of the facet $F$.
\end{definition}
Let $\mathcal F$ denote the set of facets of the rectangle $R$, then solving Problem~\ref{pb2} can be formulated as follows :
\begin{itemize}
\item Find feasible set $O_F$ such that for all $\theta \in O_F$ s.t 
\[ \displaystyle{\min_{x\in  F} n_{F}.(f(x)+G(x)\theta)} < 0 \,,\]
 for all facet $F \in \mathcal F$.
\end{itemize}
Recall that $O$ represents the feasible set from~\eqref{eq:inputcons}.
\begin{theorem}
If $O_{\mathcal F}\bigcap O \neq \emptyset $, then each $\theta^*\in O_{\mathcal F}\bigcap O $
ensure the invariance of the rectangle $R$ and solve Problem~\ref{pb2}, where $O_{\mathcal F}=\displaystyle{\bigcap_{F\in \mathcal F}O_F}$.
\end{theorem}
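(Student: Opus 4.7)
The plan is to show that the ``blocked facets'' condition enforces Nagumo-style positive invariance of the hyper-rectangle $R$. Fix any $\theta^* \in O_{\mathcal F} \cap O$. Since $\theta^* \in O$, constraint~\eqref{eq:inputcons} holds and the resulting feedback is admissible, so we only need to verify the invariance part, i.e., that every trajectory of $\dot x = f(x) + G(x)\theta^*$ starting in $R$ remains in $R$ for all $t \ge 0$.

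The proof proceeds by contradiction. Suppose there exists an initial condition $x(0) \in R$ whose trajectory leaves $R$ at some finite time. By continuity of the flow, we can define the first exit time $t^\star = \inf\{t\ge 0 \mid x(t) \notin R\}$, which satisfies $x(t^\star) \in \partial R$ (the boundary of $R$) and there is a sequence $t_k \searrow t^\star$ with $x(t_k) \notin R$. Because $\partial R$ is a union of finitely many facets, $x(t^\star)$ lies on at least one facet $F \in \mathcal F$ with outer normal $n_F$, and the fact that the trajectory exits through $F$ immediately afterwards forces the outward directional derivative $n_F \cdot \dot x(t^\star)$ to be $\ge 0$.

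However, since $\theta^* \in O_F$, the blocked-facet condition gives
\[
n_F \cdot \bigl( f(x(t^\star)) + G(x(t^\star))\theta^* \bigr) \;<\; 0,
\]
which contradicts $n_F \cdot \dot x(t^\star) \ge 0$. Hence no such exit time exists, and $x(t)\in R$ for all $t\ge 0$, establishing $R$-invariance and solving Problem~\ref{pb2}.

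The only delicate step is handling points $x(t^\star)$ lying on edges or corners of $R$, where several facets meet. The argument above still works because the trajectory, being continuous, must exit through at least one specific facet $F$ (otherwise $x(t^\star)$ would be an interior point of $R$), and for that facet the strict inequality forces an inward velocity, contradicting the exit. The strictness in the definition of ``blocked'' is precisely what provides the robustness needed here; this is the main technical obstacle, and it is overcome exactly because the condition is required to hold with strict inequality on the (closed) facets.
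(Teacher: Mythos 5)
Your proof is correct and follows essentially the same route as the paper's: the paper's own proof simply asserts that $\theta^*\in O_{\mathcal F}$ makes all facets blocked, ``implying'' invariance, and that $\theta^*\in O$ gives admissibility. You supply the standard first-exit-time (Nagumo-type) contradiction argument that the paper leaves implicit, which is a welcome elaboration rather than a different approach.
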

\begin{proof}
Since  $\theta^*\in O_{\mathcal F}$ then all the facets of $R$ are blocked implying its invariance. The fact
that  $\theta^*\in O'$ proves that the controller is admissible.
\end{proof}

%%%%%%%%%%%%%%%%%%%%%%%%%%%%%
%\subsection{Comparison with other relaxations}\label{Sec:comparisons-relaxations}
%\input{section31.tex}
\section{Reduction to Linear and bilinear feasibility problems}
In this section, we are going to relax the previous polynomial
optimization problems to a set of linear and bilinear feasibility
problems.  For doing so, we will briefly recall a relevant result
showing how a general POP can be realxed to a linear program using
Bernstein polynomials~\cite{Bensassi+Sriram}, then we will use this
relaxation to build our linear and bilinear feasibility problems in
order to solve our two given problems.
\subsection{Linear relaxation of a POP using Bernstein polynomials}
In this section, we are going to use Bernstein polynomials to
establish lower bounds for our polynomial optimization problems (POP).
More precisely, we seek tight lower bound for the optimal solution of
the following POP:
\begin{equation}\label{eq:POP}
\text{minimize}\ p(x)\ \text{s.t.}\  x \in R\,.
\end{equation}
where $p$ is multi-variate polynomial of degree $\delta:\ (\delta_1,\dots,\delta_n)$.\\
We build a linear relaxation for problem (\ref{eq:POP}), as follows:
\begin{enumerate}
\item Change of variable $q_U$ mapping $R$ to the unit box $U=[0,1]^n$. Let $p_U=p\circ q_U$.
\item Write $p_U$ in the Bernstein basis.
\item Write an equivalent POP in the Bernstein basis.
\item Exploit properties of Bernstein polynomials to formulate
a linear programming problem whose optimum is guaranteed to lower
bound the POP in Eq.~\eqref{eq:POP}. 
\end{enumerate}
We now explain the procedure in further detail. First of all, the
mapping $q_U$ from any rectangle $R$ to the unit box $[0,1]^n$ is an
affine transformation. Therefore, the multi-variate polynomial $p_U$ is also of
degree $\delta$ and we can write:
$$p_U(y)=\displaystyle{ \sum_{\alpha \le \delta} p_{\alpha} y^{\alpha}} \text{ for all } y\in U,$$
where $(p_{\alpha})_{\alpha \le \delta}$ denotes the new coefficients
of $p_U$ in the standard monomial basis, and the order relation $\alpha
\le \delta$ is such that $\alpha_i \le \delta_i$ for all $i\in
\{1,\dots,n\}$.  By writing $p_U$ in the Bernstein basis we obtain
the following form:
$$
p_U(y)=\displaystyle{\sum_{I\le\delta }  b_{I,\delta} B_{I,\delta}(y) },
$$         
where Bernstein coefficients $(b_{I,\delta})_{I\le\delta}$ are given as follows:
\begin{equation}\label{eq:bernstein-coeff-formula}
b_{I,\delta}=\sum_{J\le I} \frac{\left(
\begin{array}{c}
 i_1 \\ j_1
\end{array}
\right) \dots
\left(
\begin{array}{c}
 i_n \\ j_n
\end{array}
\right) 
}
{\left(
\begin{array}{c}
 \delta_1 \\ j_1
\end{array}
\right) \dots
\left(
\begin{array}{c}
 \delta_n \\ j_n
\end{array}
\right) } p_J=\sum_{J\le I}\frac{\left(
\begin{array}{c}
 I \\ J
\end{array}
\right)}{\left(
\begin{array}{c}
 \delta \\ J
\end{array}
\right)}p_J.
\end{equation}
and Bernstein polynomials are as follows: 
\begin{equation}
 B_{I,\delta}(y) =\left(
\begin{array}{c}
 \delta \\ I
\end{array}
\right) 
y^{I} (1_n-y)^{\delta-I}.
\end{equation}
where $y^I=({y_1}^{i_1},\dots,{y_n}^{i_n} )$, $\delta-I=(\delta_1-i_1,\dots, \delta_n-i_n)$ and $\left(
\begin{array}{c}
 \delta \\ I
\end{array}
\right)=\left(
\begin{array}{c}
 \delta_1 \\ i_1
\end{array}
\right) \dots
\left(
\begin{array}{c}
 \delta_n \\ i_n
\end{array}
\right).$\\
For the third step it is sufficient to replace the canonic form by the Bernstein form in the optimization problem, we then get
the following optimization problem:
\begin{equation}\label{eq:POP1}
\begin{array}{ll}
\text{minimize} & \displaystyle{\sum_{I\le\delta }  b_{I,\delta} B_{I,\delta}(y) }\\
\text{s.t} & y \in U.\\
& z_I=B_{I,\delta}(y).
\end{array}
\end{equation}
The final step is now to remove the nonlinearities caused by the Bernstein polynomials
by replacing each Bernstein polynomial $B_{I,\delta}$ by a fresh variable $z_I$. In 
effect, we drop the relation $z_I = B_{I,\delta}$. To recover precision, we add
some of the known linear relations between Bernstein polynomials:
\begin{itemize}
\item Unit partition: $\displaystyle{\sum_{I\le \delta} B_{I,\delta}(y) }=1.$
\item Bounded polynomials: $0 \le B_{I,\delta}(y) \le B_{I,\delta}(\frac{I}{\delta}), \text{ for all } I\le \delta.$
\end{itemize}
By injecting these properties in (\ref{eq:POP1}), we obtain the following linear relaxation:
 \begin{equation}
\label{eq:lpbern2}
\begin{array}{rllr}
\text{minimize} & \displaystyle{\sum_{I\le \delta} b_{I,\delta} z_{I,\delta} }\\
\text{s.t} & z_{I,\delta}\in \mathbb{R},\; & I\le \delta, \\
&  0 \le z_{I,\delta}\le B_{I,\delta}(\frac{I}{\delta}),  & I\le \delta, \\
& \displaystyle{\sum_{I\le \delta} z_{I,\delta} =1},
\end{array}
\end{equation}
\begin{lemma}
The optimal value of (\ref{eq:lpbern2}) gives a lower bound for the POP (\ref{eq:POP}).
\end{lemma}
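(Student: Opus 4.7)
The plan is to exhibit a ``lifting'' of any POP-feasible point into a point that is feasible for the LP~(\ref{eq:lpbern2}) with the same objective value, and then conclude by an inf-over-a-larger-set argument.

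First, I would pick an arbitrary feasible $x \in R$ for the POP~(\ref{eq:POP}) and set $y = q_U(x) \in U$, so that $p(x) = p_U(y)$. I then define the lifted vector by $\hat z_{I,\delta} := B_{I,\delta}(y)$ for every $I \le \delta$. The goal is to check that $\hat z$ satisfies each constraint of~(\ref{eq:lpbern2}), and that the linear objective evaluated at $\hat z$ equals $p(x)$.

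The verification reduces to three standard facts about the Bernstein basis on $U=[0,1]^n$:
\begin{enumerate}
\item Non-negativity: $B_{I,\delta}(y) \ge 0$ for all $y \in U$, which is immediate from the product form $y^I(1_n - y)^{\delta-I}$ together with the positive binomial coefficient.
\item Maximum at $I/\delta$: $B_{I,\delta}(y) \le B_{I,\delta}(I/\delta)$ for all $y \in U$. Componentwise, each univariate factor $\binom{\delta_k}{i_k} y_k^{i_k}(1-y_k)^{\delta_k - i_k}$ is maximized on $[0,1]$ at $y_k = i_k/\delta_k$, which one sees by differentiation; the multivariate statement follows by multiplying the univariate maxima.
\item Partition of unity: $\sum_{I \le \delta} B_{I,\delta}(y) = 1$, already recalled in the excerpt.
\end{enumerate}
Together, these imply $0 \le \hat z_{I,\delta} \le B_{I,\delta}(I/\delta)$ and $\sum_I \hat z_{I,\delta} = 1$, so $\hat z$ is LP-feasible. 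Moreover, by construction,
\[
\sum_{I \le \delta} b_{I,\delta}\, \hat z_{I,\delta} \;=\; \sum_{I \le \delta} b_{I,\delta}\, B_{I,\delta}(y) \;=\; p_U(y) \;=\; p(x).
\]

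Finally, let $p^\star$ denote the POP optimum and $\ell^\star$ the LP optimum. For every POP-feasible $x$ the construction above gives an LP-feasible $\hat z$ with objective $p(x)$, so $\ell^\star \le p(x)$ for all such $x$, hence $\ell^\star \le p^\star$, which is the desired lower bound. The only nonroutine step is item~(2), the maximum of $B_{I,\delta}$ on $U$; everything else is direct substitution and a standard inf-inclusion argument.
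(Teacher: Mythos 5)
Your proof is correct. The paper states this lemma without giving a proof (it leans on the authors' earlier work on Bernstein-based linear relaxations), and your argument is precisely the standard justification: lift each feasible $x\in R$ to $\hat z_{I,\delta}=B_{I,\delta}(q_U(x))$, check LP-feasibility via non-negativity, the bound $B_{I,\delta}(y)\le B_{I,\delta}(I/\delta)$, and the partition of unity, observe the objective value equals $p(x)$, and conclude that the LP minimum lower-bounds the POP minimum.
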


\subsection{Linear and bilinear feasibility programs for existence of Lyapunov function
(Problem~\ref{pb1})}
Let $V(x,c)$ be the assumed polynomial form for the Lyapunov
function with unknowns $c$. We first focus on 
encoding the positive definiteness of $V$ inside $R$.
We recall the sets $C,C', O,O'$ from section~\ref{Sec:lp-Bernstein-relaxations}.

First, we consider the set 
\[C: \left\{ c \left| \min_{x \in R}\left(\epsilon ||x||^2 - V(x,c) \right) \leq 0 \right.\right\}\,.\] 
 
Let $m(x)$ represent a vector of monomials involved in $\epsilon
||x||^2 - V(x,c)$ so that we may write $\epsilon ||x||^2 - V(x,c):\ \tilde{c}^t L m$, where $\tilde{c}=\left(\begin{array}{c} 1 \\ c \end{array}\right)$ for
a suitable matrix $L$.
Writing $m$ in the Bernstein
basis, we obtain $m: \mathcal{B}z$ where $z$ represents a vector of
polynomials in the Bernstein basis and $\mathcal{B}$ is a linear
transformation. Therefore, the problem~\eqref{eq:lpbern2} is written
equivalently as 
\begin{equation}\label{eq:mpt-relaxation}
\begin{aligned}
\min\  & -\tilde{c}^t\  L \scr{B}\ z \\
\mbox{s.t.}\  & A z\ \leq  b \\
\end{aligned}
\end{equation}
Let $\hat{C}$ be the set of all values of $c$ such that problem~\eqref{eq:mpt-relaxation}
with $c \in \hat{C}$ yields a non-positive optimal value. In other words, 
\begin{equation}\label{eq:rlt-parameterized}
\hat{C}:\ \left\{  c\ | \ (\forall\ z)\ A z \leq b\ \Rightarrow\ - \tilde{c}^t\ L \scr{B}\ z \ \leq 0 \right\}\,.
\end{equation}

\begin{lemma}
$\hat{C} \subseteq C$
\end{lemma}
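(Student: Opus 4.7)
The plan is to show that the LP~\eqref{eq:mpt-relaxation} is a bona fide relaxation of the polynomial problem underlying $C$: every $x \in R$ lifts, via the change of variable $q_U$ and the Bernstein expansion, to a feasible vector $z^*$ of the LP whose objective value equals the polynomial objective at $x$. The universal quantifier in the definition of $\hat{C}$ then forces the polynomial inequality at every such $x$, and the minimum condition defining $C$ follows immediately.

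Concretely, fix $c \in \hat{C}$ and an arbitrary $x \in R$. Let $y := q_U^{-1}(x) \in [0,1]^n$ and define $z^*_I := B_{I,\delta}(y)$ for each multi-index $I \leq \delta$. The first step is to check $A z^* \leq b$: the rows of this system encode exactly the three Bernstein relations listed just below~\eqref{eq:POP1} -- non-negativity on the unit box, the pointwise bound $B_{I,\delta}(y) \leq B_{I,\delta}(I/\delta)$, and the partition of unity $\sum_I B_{I,\delta}(y) = 1$ -- each of which is a standard property of the Bernstein basis, so $z^*$ is LP-feasible. The second step is to identify the LP objective at $z^*$ with the polynomial value at $x$: since $\mathcal{B}$ is by construction the linear map that expands the monomial vector $m$ in the Bernstein basis, $\mathcal{B} z^* = m(x)$, and hence $\tilde{c}^t L\, \mathcal{B} z^* = \tilde{c}^t L\, m(x) = \epsilon\|x\|^2 - V(x,c)$. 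Applying the defining implication of $\hat{C}$ to the feasible $z^*$ gives $-(\epsilon\|x\|^2 - V(x,c)) \leq 0$ at every $x \in R$, and the condition $\min_{x \in R}(\epsilon\|x\|^2 - V(x,c)) \leq 0$ defining $C$ then follows (invoking, if needed, the equilibrium point at which the value vanishes).

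The main obstacle is bookkeeping rather than anything conceptual: one must confirm that the constraint matrix $(A,b)$ in~\eqref{eq:mpt-relaxation} packages precisely the three Bernstein inequalities recalled above, and that the matrix $\mathcal{B}$ appearing in the objective is the same linear transformation effecting the monomial-to-Bernstein change of basis for $m$. Once those identifications are pinned down, the argument reduces to a one-line substitution, and the real content of the lemma is simply that the universal quantifier in $\hat{C}$ ranges over a superset of the ``honest'' Bernstein vectors $z^*(y)$ arising from points $x \in R$.
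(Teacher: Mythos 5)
Your argument is correct and is precisely the intended one: the paper states this lemma without proof, and its content is exactly what you establish, namely that the honest Bernstein lift $z^*$ of any $x \in R$ satisfies $A z^* \le b$ (partition of unity, nonnegativity, and the bound $B_{I,\delta}(y)\le B_{I,\delta}(I/\delta)$) and reproduces the polynomial objective via $\mathcal{B}z^* = m$, so the universally quantified LP inequality defining $\hat{C}$ forces the pointwise polynomial inequality on all of $R$. Your closing caveat about invoking the equilibrium point is a reasonable way to reconcile the paper's Section~III definition of $C$ (written with $\min(\cdot)\le 0$, apparently a sign/typo inconsistency with the $\min(V_c-\epsilon\|x\|^2)\ge 0$ condition of Section~II); under either reading, the feasible-point substitution you perform is the entire proof.
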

To represent the set $\hat{C}$, we use Farkas lemma, a well known
result in linear programming, to dualize~\cref{eq:rlt-parameterized}
and obtain our first linear feasibility problem for computing $\hat{C} \subseteq C$.

\begin{lemma}\label{lem:feasibility}
 The vector $c$ is a solution to the problem in~\cref{eq:rlt-parameterized} if and only if there exist multipliers $c$, ${\lambda}$
such that 
\begin{equation}
\label{eq:feasability1}
 A^t {\lambda} = - \scr{B}^t L^t \tilde{c},\; b^t\lambda \leq 0,\; \mbox{ and }\lambda \geq 0 
 \end{equation}
\end{lemma}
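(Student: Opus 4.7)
The plan is to recognize the implication defining $\hat{C}$ as an LP optimality statement and then apply strong LP duality (equivalently, Farkas' lemma). For a fixed $c$, the condition
\[ (\forall\, z)\ Az \le b\ \Rightarrow\ -\tilde{c}^t L \mathcal{B} z \le 0 \]
is, by definition, the statement that the linear program
\[ (\mathcal{P}_c):\quad \max_{z}\ -\tilde{c}^t L \mathcal{B}\, z \quad \text{subject to} \quad Az \le b \]
has optimal value $\le 0$. Writing $\gamma := -\mathcal{B}^t L^t \tilde{c}$ (so that the objective is $\gamma^t z$), the dual LP is
\[ (\mathcal{D}_c):\quad \min_{\lambda \ge 0}\ b^t \lambda \quad \text{subject to} \quad A^t \lambda = \gamma, \]
and the equivalence asserted in the lemma amounts to: $(\mathcal{P}_c)$ has value $\le 0$ iff $(\mathcal{D}_c)$ admits a feasible $\lambda$ with $b^t \lambda \le 0$.

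For the ``if'' direction I would simply invoke weak duality in its elementary form: given $\lambda \ge 0$ with $A^t\lambda = \gamma$ and $b^t\lambda \le 0$, for every $z$ with $Az \le b$ one has
\[ -\tilde{c}^t L \mathcal{B} z = \gamma^t z = (A^t\lambda)^t z = \lambda^t (Az) \le \lambda^t b = b^t \lambda \le 0, \]
where the single inequality uses $\lambda \ge 0$ coordinatewise together with $Az \le b$. This membership argument is self-contained and does not require duality machinery beyond this chain.

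For the ``only if'' direction I would appeal to strong LP duality. The key preliminary is to check that $(\mathcal{P}_c)$ is feasible and bounded so that no duality gap arises. Feasibility is immediate from the structure of the Bernstein relaxation~\eqref{eq:lpbern2}: picking any $y_0 \in U$ and setting $z_I := B_{I,\delta}(y_0)$ satisfies the partition-of-unity equation and the box constraints by construction, and these constraints are precisely those encoded by $Az \le b$. Boundedness of $(\mathcal{P}_c)$ follows because the feasible set is a compact polytope (the $z_I$ live in $[0, B_{I,\delta}(I/\delta)]$). Strong duality then guarantees a dual-optimal $\lambda^* \ge 0$ with $A^t\lambda^* = \gamma$ and $b^t\lambda^* = \mathrm{val}(\mathcal{P}_c) \le 0$, which is exactly~\eqref{eq:feasability1}.

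The only mild obstacle is bookkeeping around the LP duality invocation, specifically verifying feasibility and boundedness of $(\mathcal{P}_c)$ so that the standard strong duality theorem applies cleanly; this is handled by the explicit partition-of-unity and box constraints baked into the Bernstein relaxation, so no extra hypothesis on $c$ is required. Once this is noted, the proof is essentially a one-line application of Farkas/LP duality in each direction.
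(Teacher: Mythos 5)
Your proof is correct and follows exactly the route the paper intends: the paper offers no written proof for this lemma, merely announcing that it is obtained by ``dualizing'' \eqref{eq:rlt-parameterized} via Farkas' lemma, and your weak-duality argument for the ``if'' direction plus strong duality for the ``only if'' direction is precisely that dualization. Your added care in verifying feasibility (via the partition-of-unity point $z_I = B_{I,\delta}(y_0)$) and boundedness (compactness of the Bernstein polytope) supplies the hypotheses the affine Farkas lemma needs, which the paper leaves implicit.
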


Next, we consider the set $O$ encoding the input
constraints in ~\eqref{eq:inputcons}.
Let $\mathcal{H}_{i}$ denotes the Bernstein matrix associated to the $i$-th row of the the polynomial matrix $\mathcal{H}$ after mapping it to the unit box $U$ (with respect to the degree $\delta\in \N^n$ equal to the maximal degrees of $\mathcal H$). Consider the set $\hat{O}$ defined as the feasible values of
$\theta$ that satisfy the following constraints
\begin{equation}
\label{eq:inputcons1}
  \alpha_{\mathcal{U},k}\cdot \mathcal{H}_{i}\theta \le \beta_{\mathcal{U},k},\ k\in \mathcal K_{\mathcal U},\; \forall i=1,\dots,m \,.
 \end{equation}

\begin{lemma}
$\hat{O} \subseteq O$.
\end{lemma}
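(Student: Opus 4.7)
The plan is to exploit the two fundamental properties of the Bernstein basis on the unit box $U=[0,1]^n$ that were already highlighted in the discussion preceding~\eqref{eq:lpbern2}: each $B_{I,\delta}(y)$ is nonnegative, and $\sum_{I\le \delta} B_{I,\delta}(y)=1$ (partition of unity). Together these yield the classical enclosure fact that the value of any polynomial on $U$ is a convex combination of its Bernstein coefficients, and in particular is bounded above by their maximum. I would apply this enclosure to the family of scalar polynomials $p_k(x;\theta) := \alpha_{\mathcal U,k}\cdot \mathcal H(x)\theta$ parametrized by $k\in \mathcal K_{\mathcal U}$ and $\theta$.

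First I would fix $\theta \in \hat O$, $k \in \mathcal K_{\mathcal U}$ and an arbitrary $x\in R$, and change variables to $y=q_U^{-1}(x) \in U$. Since $\mathcal H$ is a polynomial matrix of maximal degrees $\delta$, applying the Bernstein expansion component-wise to its rows gives $\mathcal H(q_U(y)) = \sum_{I\le \delta} \mathcal H_I\, B_{I,\delta}(y)$, where $\mathcal H_I$ collects the $I$-th Bernstein coefficients of the rows (the notation $\mathcal H_i$ with $i=1,\dots,m$ in Eq.~\eqref{eq:inputcons1} is precisely this enumeration of multi-indices $I\le\delta$). By linearity of $p_k$ in its spatial argument, one obtains
\[
p_k(x;\theta) \;=\; \sum_{I\le \delta}\bigl(\alpha_{\mathcal U,k}\cdot \mathcal H_I\,\theta\bigr)\,B_{I,\delta}(y).
\]

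Next I would invoke the hypothesis $\theta\in \hat O$, which by~\eqref{eq:inputcons1} gives $\alpha_{\mathcal U,k}\cdot \mathcal H_I\,\theta \le \beta_{\mathcal U,k}$ for every $I$. Multiplying each of these scalar inequalities by the nonnegative quantity $B_{I,\delta}(y)$ and summing over $I\le\delta$, the partition-of-unity identity yields
\[
p_k(x;\theta) \;\le\; \beta_{\mathcal U,k}\sum_{I\le\delta} B_{I,\delta}(y) \;=\; \beta_{\mathcal U,k}.
\]
Since $k$ and $x\in R$ were arbitrary, this is exactly the defining condition~\eqref{eq:inputcons} of $O$, so $\theta\in O$ and the inclusion $\hat O \subseteq O$ follows.

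The argument is essentially bookkeeping; the real content is the Bernstein enclosure property combined with the linearity of the input constraints in $\theta$. The only care needed is in the notational identification between the index $i$ used in~\eqref{eq:inputcons1} and the Bernstein multi-indices $I\le \delta$. Once that correspondence is fixed, no further difficulty arises, and there is no need to dualize via Farkas' lemma as in Lemma~\ref{lem:feasibility}, since the input constraints already take the form of direct upper bounds on linear-in-$\theta$ Bernstein coefficients.
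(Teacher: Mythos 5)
Your proof is correct: the paper states this lemma without proof, and your argument via the nonnegativity and partition-of-unity properties of the Bernstein basis (so that $\alpha_{\mathcal U,k}\cdot\mathcal H(x)\theta$ is a convex combination of the quantities $\alpha_{\mathcal U,k}\cdot\mathcal H_I\theta$, each bounded by $\beta_{\mathcal U,k}$) is exactly the standard enclosure argument the authors implicitly rely on, the same one underlying their Lemma~1. Your explicit handling of the ambiguous indexing in~\eqref{eq:inputcons1} (reading $i$ as enumerating the Bernstein multi-indices $I\le\delta$) is the reading that makes the statement true, and your observation that no Farkas dualization is needed here is also accurate.
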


Now we will show that finding the feasible sets $C'$ and $O'$ leads to a bilinear program.
First,  we can find a polynomial matrix $B(x)$ to allow us to write
$$-\grad V_c(x) \cdot (f(x)+G(x)\theta)=c^t\ {B(x)} \tilde{\theta} ,$$
where $ \tilde{\theta}=\left(\begin{array}{c}
 1 \\ \theta
\end{array}\right)$ and $B(x)=(\grad V_m(x))^t\cdot (f(x) \quad G(x))$. Here  $(\grad V_m(x))$ denotes the matrix where each column corresponds
to the Jacobian of one of the monomials of the Lyapunov function.

The main difference with the previous case is that instead of the vector of monomials $m$ we have ${B(x)} \tilde{\theta}$. The degree $\delta$
will be chosen as the maximal degrees of the polynomials in $B(x)$.
By consequence, the Bernstein conversion matrix will be a set of $n$ matrices $\scr{B}_{\theta,i}= {\scr{B}_i}\tilde{\theta}$ 
where  $\scr{B}_i$ is the Bernstein conversion matrix corresponding to the polynomial row $B_i(x)$ of the polynomial matrix $B(x)$ after mapping
it to the unit box $U$.
%\begin{itemize}
%For each  we compute .
%\item $\scr{B'}$ is computed by composing the $\scr{B}_i$ matrices.
%\end{itemize}
Now using the same ideas as previously we will get by applying Farkas lemma a set of linear programs:
\begin{lemma}\label{lem:feasibility1}
$c$ is a solution to the problem in~\cref{eq:rlt-parameterized} if and only if there exist multipliers $c$ and $\lambda^{i}$
such that 
\begin{equation}
\label{eq:feasability2}
  A^t {\lambda}^{i} = - {\scr{B}_{\theta,i}}^t c,\; 
  b^t \lambda^{i} \leq 0,\; \mbox{ and } \lambda^{i} \geq 0, \mbox{ for all } i=1,\dots,n.
 \end{equation}
\end{lemma}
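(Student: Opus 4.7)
The plan is to mimic the Farkas-based dualization used in Lemma~\ref{lem:feasibility}, but adapted to the bilinear Lie-derivative expression $-\nabla V_c\cdot(f+G\theta)=c^t B(x)\tilde{\theta}$. The single new ingredient is that the polynomial matrix $B(x)$ is decomposed row-wise, giving rise to $n$ parameterized linear programs instead of one, and correspondingly $n$ dual multipliers $\lambda^1,\dots,\lambda^n$.

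First I would set up the Bernstein relaxation explicitly. Using $c^t B(x)\tilde{\theta}=\sum_{i=1}^{n} c_i\,(B_i(x)\tilde{\theta})$ and the fact that each row polynomial $B_i(x)\tilde{\theta}$ can be written in the Bernstein basis with conversion $\mathcal{B}_{\theta,i}=\mathcal{B}_i\tilde{\theta}$, the non-positivity requirement on the Lie derivative, after Bernstein relaxation following~\eqref{eq:lpbern2}, becomes the parameterized implication
\[
(\forall z)\ Az\le b\ \Longrightarrow\ -c^t \mathcal{B}_{\theta,i}\,z \le 0, \quad i=1,\dots,n,
\]
where the constraints $Az\le b$ collect the unit-partition equality and the box bounds $0\le z_{I,\delta}\le B_{I,\delta}(I/\delta)$. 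Crucially, the feasible polyhedron $\{z:Az\le b\}$ is independent of the decision variables $c$ and $\theta$, which makes the LP duality step apply cleanly and the final constraint linear in $c$ (with $\theta$ appearing only inside $\mathcal{B}_{\theta,i}$).

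Next I would apply Farkas' lemma to each of the $n$ implications separately. Since $\{z:Az\le b\}$ is nonempty and bounded, strong LP duality holds, so the implication is equivalent to the existence of a nonnegative multiplier $\lambda^i$ with $A^t\lambda^i = -\mathcal{B}_{\theta,i}^t c$ and $b^t\lambda^i\le 0$. This is precisely the same dualization argument as in Lemma~\ref{lem:feasibility}, merely carried out $n$ times in parallel. Conjoining these $n$ sets of dual conditions yields exactly the system~\eqref{eq:feasability2}. The reverse direction is by the standard weak-duality estimate: any feasible $(\lambda^i)$ certifies the required inequality for any primal-feasible $z$.

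The main obstacle is not the algebra but the bookkeeping: one must be careful that $\theta$ plays the role of a parameter (absorbed into $\mathcal{B}_{\theta,i}$) and not of an LP variable, so that Farkas remains applicable and the resulting feasibility system is linear in $c$ for fixed $\theta$ (and linear in $\theta$ for fixed $c$), which is exactly the bilinear structure the subsequent policy iteration exploits. A secondary point is to make sure the row-wise split is a genuine decomposition of the original constraint rather than a strengthening; this follows because $c^t B(x)\tilde{\theta}$ is literally the sum of the row contributions $c_i\,B_i(x)\tilde{\theta}$, and each row is relaxed on the same polyhedron $\{z:Az\le b\}$.
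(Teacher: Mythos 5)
The paper states Lemma~\ref{lem:feasibility1} \emph{without} a proof: it only says that the result follows ``using the same ideas as previously'' by applying Farkas' lemma. So your overall plan --- set up the Bernstein relaxation of the Lie-derivative condition as a parameterized implication over the fixed polyhedron $\{z : Az\le b\}$ and dualize, using strong duality for the forward direction and weak duality for the converse --- is exactly the intended argument, and that per-implication dualization step is correct (the polyhedron is nonempty and bounded and independent of $c$ and $\theta$, as you note).

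The genuine gap is in your closing claim that the row-wise split is ``a genuine decomposition of the original constraint rather than a strengthening'' because $c^tB(x)\tilde\theta$ is the sum of the row contributions $c_i\,B_i(x)\tilde\theta$. That inference goes only one way. The condition actually needed for Problem~\ref{pb1} is nonnegativity of the \emph{single} polynomial $c^tB(x)\tilde\theta$ on $R$; its Bernstein relaxation is the one implication $(\forall z)\,(Az\le b \Rightarrow -(\sum_i c_i\,\mathcal{B}_{\theta,i})^t z\le 0)$ with the aggregated coefficient vector. Requiring each summand's relaxation to be nonpositive separately is strictly stronger: the maximum of a sum over $\{Az\le b\}$ can be nonpositive while some individual summand attains a positive maximum there. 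Hence your $n$ dual systems imply the aggregated certificate (take $\lambda=\sum_i\lambda^i$), but the converse --- which is what the ``only if'' direction of the stated equivalence requires --- fails, so your argument establishes~\eqref{eq:feasability2} only as a sufficient condition. To obtain an actual ``if and only if'' you should apply Farkas once to the aggregated implication, producing a single multiplier $\lambda$ with $A^t\lambda=-\sum_i c_i\,\mathcal{B}_{\theta,i}$; if you keep the $n$-fold form, you should downgrade the claim to sufficiency (which is all the policy iteration needs, since only $\hat{C}'\subseteq C'$ is used). Relatedly, $\mathcal{B}_{\theta,i}=\mathcal{B}_i\tilde\theta$ is the Bernstein coefficient vector of the scalar polynomial $B_i(x)\tilde\theta$, so the expression $\mathcal{B}_{\theta,i}^t c$ (and your $-c^t\mathcal{B}_{\theta,i}z$) is dimensionally inconsistent as written; your proof needs to say explicitly whether it means the per-row quantity $c_i\,\mathcal{B}_{\theta,i}$ or the stacked matrix product, because the two readings yield different lemmas. (A minor further point: the lemma's reference to~\eqref{eq:rlt-parameterized}, the positive-definiteness implication, is a slip in the paper that you silently and correctly replace by the Lie-derivative implication; it would be worth stating that substitute implication explicitly.)
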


Since $\scr{B}_{\theta,i}= \scr{B}_i'\tilde{\theta}$, the previous
lemma give us a set of \emph{bilinear feasibility problems} for the
feasible sets $C'$ and $O'$. But checking feasibility and solving a
bilinear program is well-known to be NP-hard~\cite{GareyJohnson}. Rather
than solve these problems directly, we consider a policy iteration approach
in Section~\ref{Sec:poly-lyap-synth}.

\subsection{Linear feasibility programs for positive invariance (Problem~\ref{pb2})}
We now turn to the problem of encoding the invariance of the region $R$.
Our approach reuses ideas from earlier work by Ben Sassi and Girard
using the blossoming principle to enforce the invariance of a polytope
for a polynomial system~\cite{Bensassi3}. We obtain linear constraints 
over $\theta$ that define a feasible region $\hat{O}_F \subseteq O_F$
such that choosing any $\theta \in \hat{O}_F$ guarantees that the region $R$ 
will be maintained invariant.

First, will need to define a facet and its outer normal~\cite{Belta06} for a general rectangle $R_n=\prod_{k=1}^{k=n}[a_k,b_k]$:
\begin{itemize}
\item $\xi_k:\{a_k,b_k\}\mapsto\{0,1\} $ when for all $k\in\{1,\ldots,n\}$, $\xi_k(a_k)=0$ and $\xi_k(b_k)=1$. 
\item$F_{j,\xi_j(w_j)}=\{x\in R_n\; | \;x_j=w_j\}$: the set of facets of $R_n$
 where for all $j\in\{1,\ldots,n\}$, $w_j\in\{a_j,b_j\}$. 
\item$n_{j,\xi_j(w_j)}=(-1)^{(\xi_j(w_j)+1)}e_j$: the outer normal of the facet $F_{j,\xi_j(w_j)}$ where 
the vectors $e_j$ form the canonical basis of $R^n$.
\end{itemize}
For the invariance context, all the results are derived
from~\cite{Bensassi3} so they are given without demonstration. We
simply adapt the main result (Theorem 6 in~\cite{Bensassi3}) to the
specific form of the controller required in this work.  For doing so
we define for a fixed degree $\delta=(\delta_1,\dots,\delta_n)$, for
all $j\in\{1,\dots,n\}$ and all $l\in\{1,\dots,\delta_j\}$ :
 $$I_{j,l}=\{ I=(i_1,\dots,i_n)\in \N^n, \text{ such that }I \le \delta \text{ and } i_j=l\}.$$
More precisely, we need to replace in~\cite{Bensassi3} the vector field $f$ by $f+G\theta$ and the blossom values by the Bernstein coefficients.
Let $f_U$ and $G_U$ denote the polynomial vector field $f$ and the polynomial matrix $G$ after mapping 
them to the unit box $U$ and let $f_{U,I}$ and $G_{U,I}$ the associated Bernstein coefficient vector and matrix for all multi-indice $I\le \delta$. 
We will obtain the following result:
\begin{corollary}
\label{coro:inv}
For all $j\in\{1,\dots,n\}$, we have:
\begin{enumerate}
\item The facet $F_{j,\xi_j(a_j)}$ of the rectangle $R_n$ is blocked for the controlled system $\dot{x} = f+G\theta$ 
 if $f_{U,I,j}+G_{U,I,j}\theta\ge 0 $ for all $I \in I_{j,0}$.
\item The facet $F_{j,\xi_j(b_j)}$ of the rectangle $R_n$ is blocked for the controlled system $\dot{x} = f+G\theta$ 
if $f_{U,I,j}+G_{U,I,j}\theta\le 0 $ for all $I \in I_{j,\delta_j}$,
\end{enumerate}
where  $f_{U,I,j}$ and $G_{U,I,j}$ are respectively the $j$ component (row) of the vector $f_{U,I}$ (matrix $G_{U,I}$). 
 \end{corollary}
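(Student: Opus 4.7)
The plan is to adapt Theorem~6 of~\cite{Bensassi3} to the controlled vector field $f + G\theta$. First, I apply the affine change of coordinates $q_U$ mapping $R_n$ onto $U = [0,1]^n$. Because $q_U$ acts coordinate-wise with positive scaling factors $\overline{x_k} - \underline{x_k}$, the $j$-th component of the vector field in the new coordinates is a positive scalar multiple of $(f+G\theta)_j \circ q_U^{-1}$, so signs are preserved. Moreover the facet $F_{j,\xi_j(a_j)}$ maps to $\{y \in U : y_j = 0\}$ with outer normal $-e_j$, while $F_{j,\xi_j(b_j)}$ maps to $\{y \in U : y_j = 1\}$ with outer normal $+e_j$. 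Hence blocking these facets reduces to requiring the $j$-th component of $f_U + G_U\theta$ to be non-negative on the face $\{y_j = 0\}$ and non-positive on the face $\{y_j = 1\}$, respectively.

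Next, I expand the $j$-th component in the Bernstein basis of degree $\delta$:
\[ (f_U + G_U\theta)_j(y) \;=\; \sum_{I \leq \delta} \bigl(f_{U,I,j} + G_{U,I,j}\theta\bigr)\, B_{I,\delta}(y). \]
The key property is the boundary behaviour of the Bernstein basis: since $B_{I,\delta}(y) = \binom{\delta}{I} y^I (1_n - y)^{\delta - I}$, the factor $y_j^{i_j}$ forces $B_{I,\delta}$ to vanish on $\{y_j=0\}$ whenever $i_j \geq 1$, and the factor $(1-y_j)^{\delta_j - i_j}$ forces it to vanish on $\{y_j=1\}$ whenever $i_j \leq \delta_j - 1$. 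Thus, restricted to those two faces, only the indices in $I_{j,0}$ and $I_{j,\delta_j}$, respectively, contribute to the sum. Since each $B_{I,\delta}(y) \geq 0$ throughout $U$, any uniform sign of the surviving Bernstein coefficients propagates pointwise to the whole face.

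Combining these two steps yields both claims: if $f_{U,I,j} + G_{U,I,j}\theta \geq 0$ for every $I \in I_{j,0}$, then $(f_U + G_U\theta)_j(y) \geq 0$ on $\{y_j = 0\}$, so $F_{j,\xi_j(a_j)}$ is blocked; the argument for part~(2) is symmetric, using $I_{j,\delta_j}$ and the opposite inequality. The main obstacle is essentially bookkeeping---confirming that $q_U$ does not flip signs (it cannot, since $\overline{x_k} - \underline{x_k} > 0$) and carefully identifying which Bernstein coefficients survive restriction to each face. Once Theorem~6 of~\cite{Bensassi3} is invoked, this leaves nothing beyond unrolling the Bernstein expansion on the boundary of $U$.
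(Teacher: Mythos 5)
Your argument is correct and is exactly the mechanism the paper relies on: the paper states Corollary~\ref{coro:inv} without proof, deferring to Theorem~6 of~\cite{Bensassi3}, and your restriction-to-a-face argument (under the sign-preserving affine map $q_U$, only the Bernstein basis functions with $i_j=0$, resp.\ $i_j=\delta_j$, survive on $\{y_j=0\}$, resp.\ $\{y_j=1\}$, and every $B_{I,\delta}$ is non-negative on $U$) is precisely that result adapted to the vector field $f+G\theta$ with blossom values replaced by Bernstein coefficients. The one discrepancy is inherited from the statement itself rather than introduced by you: the non-strict hypotheses $\ge 0$ and $\le 0$ only give non-strict sign conditions on the facet, whereas the paper's definition of a blocked facet requires a strict inequality, so to meet that definition literally one would need strict inequalities on the Bernstein coefficients (or a non-strict notion of blocking).
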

 The corollary gives us a linear program allowing to compute the feasible sets $\hat{O}_F$ for all facets $F\in \mathcal F$.

\section{Joint synthesis of polynomial Lyapunov functions and controllers}\label{Sec:poly-lyap-synth}
First of all, we are going to present an algorithm to solve our stability problem, then we will show how the results can be improved
by using a decomposition criterion and extend the results using this decomposition to a particular class of hybrid systems.
\subsection{Algorithm}
In this section, we give an algorithm allowing to summarize the previous results in order to solve our stabilization problems by
synthesizing jointly the controller that stabilize the system and the Lyapunov function for the controlled system.\\
In fact, the main problem when regrouping the feasibility problems of the previous section is that we have to deal with a bilinear program for which
there is no practical way to solve it. We will define an iterative approach where for each step one of the parameters ($\theta$ for the controller or $c$ for the Lyapunov function) is fixed and the other is computed by solving a linear program. 
The overall approach is given as follows:
\begin{enumerate}
\item Initialize $\theta^*=0$. 
\item Compute feasible set $C$ using feasibility problem (\ref{eq:feasability1}). 
\item Find a "maximal" coefficient vector $c\in C$ for the Lyapunov function: \\
We fix $\theta=\theta^*$ and we solve the  feasibility problems (\ref{eq:feasability2}) by relaxing $"\le 0"$ by $"\le t"$
where $t$ will be a positive decision variable to be minimized. The outputs of the linear program are $(c^*, t^*)$.
%If $t^* \approx 0$ then $V_{c^*}$ certify the global stability of the controlled system. STOP if $\theta^*\neq 0$.
\item Find a "maximal" coefficient vector $\theta$ for the controller: \\
We fix $c=c^*$ and we solve the feasibility problems given by the (RHS) of (\ref{eq:inputcons1}) and the ones of Corollary~\ref{coro:inv}.
By using the same idea of relaxing $"\le 0"$ by $"\le t"$ for a positive decision variable $t$ and minimize over $t$, we get outputs $({\theta}^*, t^*)$.
If $t^* \approx 0$ STOP , else Go back to the previous step.
\end{enumerate}
When the algorithm terminates, the outputs $(c^*,\theta^*)$ will give us the admissible controller and the Lyapunov function proving the asymptotic stability
of the controlled system. The invariance problem of the rectangular domain will be ensured.
\subsection{Decomposition and generalization for a particular class of hybrid systems}
As mentioned in~\cite{Bensassi+Sriram}, the Bernstein relaxation (\ref{eq:lpbern2}) can be much more efficient once a good decomposition 
is provided. By "good" we mean a box decomposition where local minima will belong to the edge of the box. Since the global minimum of the 
Lyapunov function is known in advance ($0_n$ in our case), a decomposition of the rectangle $R$ around zero (by putting zeros on the edges of the resulting rectangles)
will significantly improve the precision of the approach. The drawback is that $2^n$ decomposition are needed. In fact by using this decomposition, each feasibility
problem in the previous algorithm (except the invariance ones) will be replaced by $2^n$ feasibility problems.\\
Now, since the approach deals with a box partition of the state space, one can easily extend the dynamical system (\ref{eq:ode}) to 
the following class of hybrid system where the state space is decomposed to boxes and each box has its own polynomial dynamic. More precisely, for all $i\le 2^n$, let $R_i$ 
be the set of boxes of our 'zero' decomposition and the hybrid system will be following :
\begin{equation}
\label{eq:odeg}
\left\{
\begin{array}{ll}
\dot x_i(t) = f_i(x(t))+G(x) \theta_i,\; \theta_i\in  O \;  x\in R_i.\\
\end{array}\right.
\end{equation}
The difference here is that each of the $2^n$ feasibility problems (Step 4) will provide an admissible controller $\theta_i$ trying to make 
the Lyapunov function decreasing in the corresponding box. So we will get a common Lyapunov function having multiple derivatives (one
for each box). Also we should remark that when dealing with the invariance problem, linear feasibility problems of Corollary~\ref{coro:inv} should be adapted. In fact, for each 
box one should ensure the feasibility problems with respect to the facets that should be blocked.
\begin{remark}
The previous result will hold for each other box decomposition. In fact we can always be reduced to the previous case by decomposing each sub
box containing $0_n$ into sub boxes where $0_n$ will belong to the edges.
\end{remark}
%%\subsection{}

\section{Numerical results}\label{Sec:numerical-eval}
\subsection{Illustrative example}
To illustrate the approach, we consider the following $2$-dimensional polynomial system and a box $R=[-1,1]^2$. 
\begin{equation*}
\left\{
\begin{array}{rcll}
 \dot{x}_1 &=&f_1(x)= x_2-x_1^2 +3x_2^2-2x_1x_2, \vspace{2mm}\\
 \dot{x}_2 &=&f_2(x)=-x_1-3x_1^2+x_2^2+2x_1x_2 .
\end{array}\right.
\end{equation*}
By simulation, one can see that the origin is not asymptotically stable and that the box $[-1,1]^2$ is not invariant for the system (see Figure~\ref{fig1}). 
\begin{figure}[!h]
\begin{center}
\includegraphics[angle=0,width=0.4\textwidth]{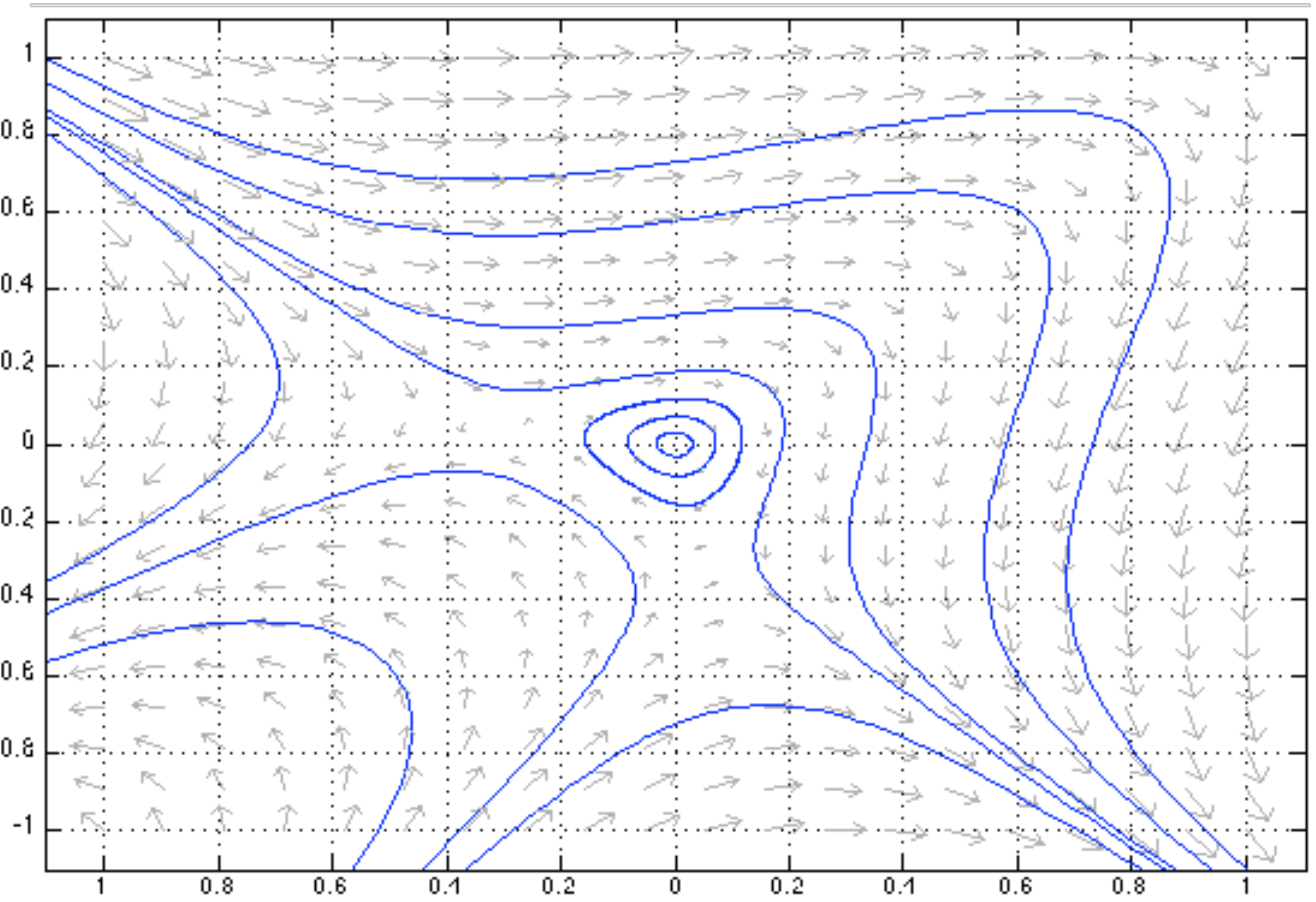} 
\caption{Vector fields and some trajectories of the uncontrolled system.} %la l�gende
\label{fig1} 
\end{center}
\end{figure} 
Using our approach, we aim to find a linear state feedback controller ensuring the asymptotic stability of the origin
and the invariance of $R$. We will consider the following controlled system:
\begin{equation*}
\dot x(t) = f(x(t))+g(x(t)) u(x(t)),\; 
\end{equation*}
where $g(x)=I_2=\left( {\begin{array}{cc}
   1 & 0 \\
    0 & 1  \\
  \end{array} } \right)$ and $u(x)=Ax$ where $A= \left( {\begin{array}{cc}
   a_{11} & a_{12} \\
    a_{21} & a_{22}  \\
  \end{array} } \right)$.\\
 Since we look for a linear state feedback controller, we can write $u(x)=H(x)\theta$ where \\
  $H(x)= \left( {\begin{array}{cccc}
   x_1 & x_2 & 0 & 0\\
    0 & 0 & x_1 &x_2 \\
 \end{array} } \right)$ and $\theta=(a_{11}, a_{12}, a_{21}, a_{22})^\top$.\\
For the Lyapunov function, we fix the following form :
$$V_c(x)=c_1x_1+c_2x_2+c_3x_1^2+c_4x_2^2+c_5x_1x_2+c_6x_1^4+c_7x_2^4.$$
We impose that $-5 \le c_i \le 5$ for all $i \in\{1,\dots,7\}$ and add the fact that
$ c_i \ge 0.01$  for all $i \in\{3,4\}$ in order to ensure that $V$ is positive definite.
Also the linear coefficients of the controller are bounded by $-5$ and $5$.\\
The iterative approach needs two iterations to globally stabilize $R$. Outputs are :
\begin{itemize}
\item $A= \left( {\begin{array}{cc}
   -4.5471& 0.7000  \\
     3.9290 & -4.6218  \\
  \end{array} } \right)$.
\item $V(x)=0.01(x_1^2+x_2^2)+0.009x_1x_2+0.036x_1^4+0.023x_2^4.$
\end{itemize}
One can simulate the obtained system and verify the asymptotic stability and the invariance of $R$ (see Figure~\ref{fig1}).
\begin{figure}[!h]
\begin{center}
\includegraphics[angle=0,width=0.4\textwidth]{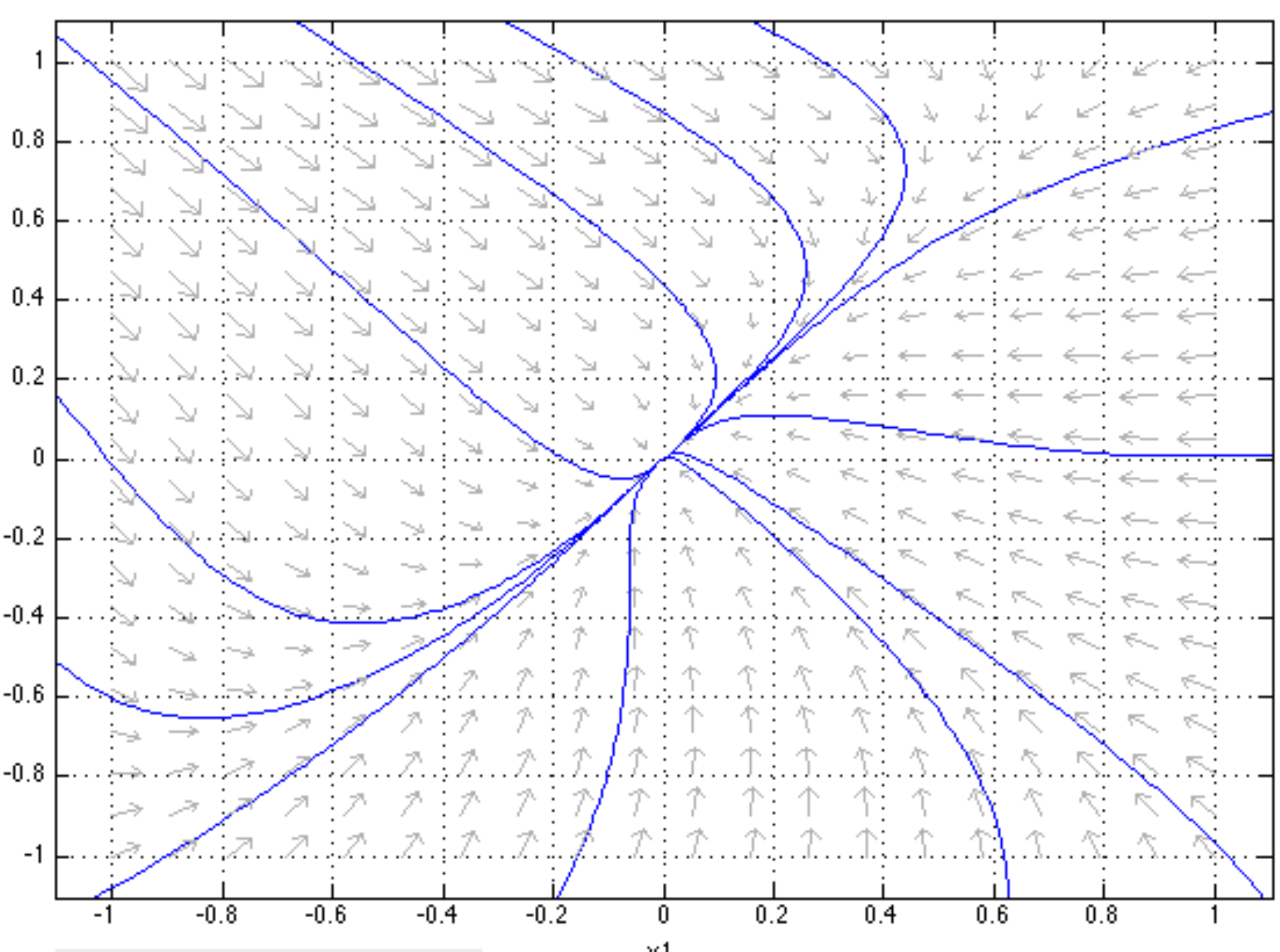} 
\caption{Vector fields and some trajectories of the controlled system.} %la l�gende
\label{fig1} 
\end{center}
\end{figure} 
Now, we will use the approach to deal with the Hybrid case. More precisely, we decompose $R$ around zero ($R_1=[-1,0]^2$, 
$R_2= [-1,0]\times[0,1]$, $R_3= [0,1]\times[-1,0]$, $R_4= [0,1]^2$) and try to find for each sub-box $R_i$ a linear controller $u_i$ such that 
the following Hybrid system
\begin{equation*}
\dot x(t) = f(x(t))+g(x(t)) u_i(x),\; 
\end{equation*}
is globally stable with respect to $R$ where $u_i(x)=A_ix$ for all $x \in R_i$ and all $i\in\{1,\dots,4\}$ .\\
In this case, only one iteration is needed to stabilize the system inside $R$ since we have more freedom in the choice of the controller. Outputs are :
\begin{itemize}
\item $A_1= \left( {\begin{array}{cc}
      -4.4721 & -3.4219\\   -2.9376  & -4.0957 \\
    \end{array} } \right)$.
\item  $A_2= \left( {\begin{array}{cc}  
  -4.3795&   0.3130\\    1.1904 &  -4.3770 \\  
  \end{array} } \right)$.
\item  $A_3= \left( {\begin{array}{cc}  
  -4.3331  &  2.6016 \\   3.3924 &  -4.2926 \\
   \end{array} } \right)$.
\item     $A_4= \left( {\begin{array}{cc}  
  -4.1427  & -3.0418\\   -3.3052  & -4.4195 \\
   \end{array} } \right)$.
 \item $V(x)= 4.7737x_1^2+4.7743x_2^2+4.8172x_1^4+4.8175x_2^4.$
 \end{itemize}
By simulating trajectories in those boxes, we can verify that the stability property and the box invariance hold (see Figure~\ref{fig2} for $R_1$ and $R_2$).
\begin{figure}[!h]
\begin{center}
\includegraphics[angle=0,width=0.4\textwidth]{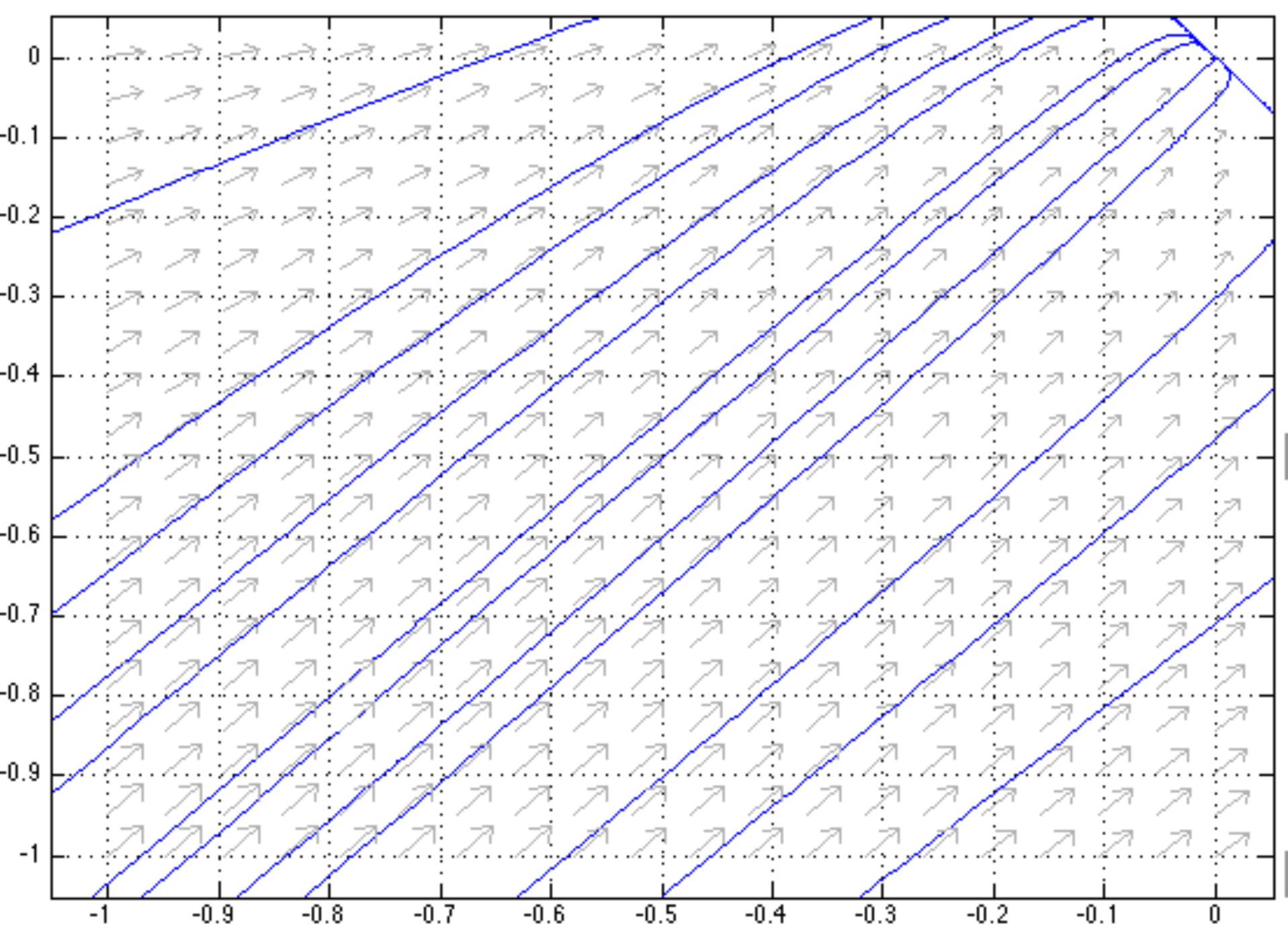} \includegraphics[angle=0,width=0.4\textwidth]{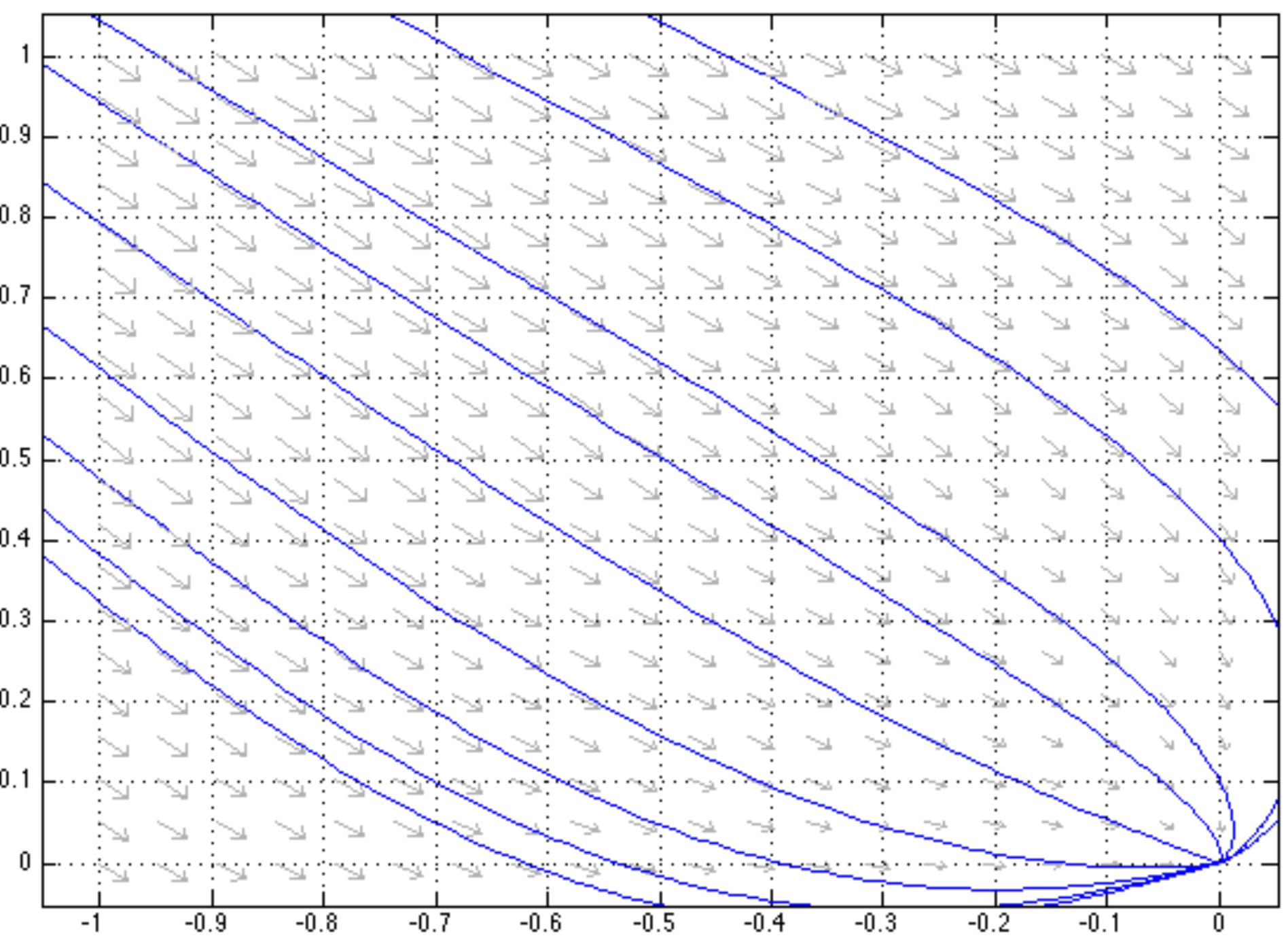} 
\caption{Vector fields and some trajectories of the controlled system associated to $R_1$ (on the top) and $R_2$ (on the bottom)} %la l�gende
\label{fig2} 
\end{center}
\end{figure} 
\subsection{Benchmarks}
We discuss the results obtained for a set of benchmarks borrowed from the literature. We run the algorithm until a good precision $\epsilon$ \footnote{$\epsilon$ 
denotes the precision $t^*$ of the algorithm.}is reached or a fixed number of iterations (the approach fails to make progress). In the latter case one can add more flexibility in the
templates by adding terms of higher degrees. In failure cases, we remove the invariance constraints in order to achieve just the asymptotic stability property. We report separately
stability (Stab column) and invariance (Inv column).  A threshold of precision around $10^{-6}$
is considered to confirm that the property holds. We report also the number of iteration needed to achieve the given precision.
A detailed description of the systems, explicit expression of Lyapunov functions and controllers are given in the Appendix.

\begin{table}[!h]
\caption{Table showing performance of our method on a set of benchmarks. }\label{Tab:table-ex-1}
\begin{center}
\begin{tabular}{|r|l|l|l|l||l|l|l|}

\hline
 Id & $R$ & $\mathcal U$  & $\epsilon$ & Stab &Inv & Iter\\
\hline
 1&    $[-0.5,0.5]^2$    &$[-1,1]$       &$4*10^{-19}$ &\tick    &\cross   & 1   \\
 2&    $[-1,1]^2$          &$[-2,2]$       &$4*10^{-9}$   &\tick    &\tick      & 2   \\
 3&    $[-1,1]^2$          &$[-4,4]$       &$2*10^{-17}$ &\tick    &\cross   & 1   \\
 4&    $[-1,1]^2$          &$[-1,1]$       &$4*10^{-7}$   &\tick    &\tick      & 3     \\
 5&    $[-1,1]^3$          &$[-10,10]$   &$2*10^{-6}$   &\tick    &\tick      & 4   \\
 6&    $[-0.5,0.5]^3$    &$[-5,5]$       &$2*10^{-7}$   &\tick    &\cross   &6     \\ 
 7&    $[-0.5,0.5]^3$    &$[-3,3]$       &$9*10^{-7}$   &\tick    &\cross   &3     \\
 8&    $[-0.5,0.5]^3$    &$[-1,1]$       &$4*10^{-5}$   &?    &\cross       &9    \\
 9&    $[-0.1,0.1]^4$    &$[-5,5]$       &$5*10^{-5}$   &?    &\cross       &3   \\
 10&   $[-0.1,0.1]^4$&$[-10,10]$       &$8*10^{-5}$  &?    &\cross       &4   \\
11&   $[-0.05,0.05]^5$&$[-1,1]$       &$6*10^{-5}$   &?    &\cross       & 2   \\

\hline
\end{tabular}
\end{center}
\end{table}
Note that that invariance conditions usually make the feasibility of the approach very restricted since it needs to holds 
simultaneously with the stability conditions. This explains the fact that only few stabilazable systems can only have the invariance box 
property. The computation time is roughly in size of the problem and the templates: roughly each iteration of two dimensional systems (systems $1,2,3,4$) required almost one second, for three dimensional systems it required between two and three seconds (systems $4,5,6,7$).
\section{Appendix}

\begin{example}(see~\cite{Liberzon1999})
\begin{equation*}
\left\{
\begin{array}{ll}
\dot{ x}= y. \\
\dot{y} = -x + u(y).
\end{array}
\right.
\end{equation*}
\begin{itemize}
\item $u(y)=-2y$.
\item $V(x,y)=0.01(x^2+y^2)$ 
\end{itemize}
\end{example}
\begin{example}(see Lectures on back-stepping\footnote{\url http://control.ee.ethz.ch/~apnoco/Lectures2014})
\begin{equation*}
\left\{
\begin{array}{ll}
\dot{x} = y - x^3.\\
\dot{y} = u(x,y).
\end{array}
\right.
\end{equation*}
\begin{itemize}
\item $u(x,y)=-x-\frac{2}{3}y+\frac{1}{3}x^3$ .
\item $V(x,y)=0.01(y^2+x^2y^2)+0.0102x^2+0.0007 xy$. 
\end{itemize}
\end{example}
\begin{example}
\begin{equation*}
\left\{
\begin{array}{ll}
\dot{x} = y \\
\dot{y} = u(y)y^2-x.
\end{array}
\right.
\end{equation*}
\begin{itemize}
\item $u(y)=4(y^2-y)$.
\item $V(x,y)=0.01(x^2+y^2+x^2y^2)+0.005(x^4+y^4)$.
\end{itemize}
\end{example}
\begin{example}(See~\cite{Perruquetti1995})
\begin{equation*}
\left\{
\begin{array}{ll}
\dot{x} = -x(0.1+(x+y)^2) \\
\dot{y} =(u(x)+x)(0.1+(x+y)^2).
\end{array}
\right.
\end{equation*}
\begin{itemize}
\item $u(x)=-x$.
\item $V(x,y)=0.01(y^2+x^2y^2)+0.0657x^2+0.0022xy+0.0019y^4$.
\end{itemize}
\end{example}
\begin{example}\begin{equation*}
\left\{
\begin{array}{ll}
\dot{x} = y+0.5z^2. \\
\dot{y} =z.\\
\dot{z} =u(x,y,z).
\end{array}
\right.
\end{equation*}
\begin{itemize}
\item $u(x,y,z)=-0.59185x-5.9217y-0.51825z+0.061785x^2+0.12415xy-0.4642xz+0.048453x^3-0.57345y^3$.
\item $V(x,y,z)=0.01x^2+0.0583y^2+0.0099z^2+0.0134xy+0.003xz+0.004y^4+0.0024yz+0.0003z^4$.
\end{itemize}
\end{example}
\begin{example}(See~\cite{Yeom2012})
\begin{equation*}
\left\{
\begin{array}{ll}
\dot{x} = -x+y-z. \\
\dot{y} =-x(z+1)-y.\\
\dot{z} =-x+u(x,z).
\end{array}
\right.
\end{equation*}
\begin{itemize}
\item $u(x,z)=1.76524x-4.7037z$.
\item $V(x,y,z)=0.01(x^2+y^2)+0.013z^2$.
\end{itemize}
\end{example}
\begin{example}(see Lectures on back-stepping)
\begin{equation*}
\left\{
\begin{array}{ll}
\dot{x} = -x^3+y. \\
\dot{y} =y^3+z.\\
\dot{z} =u(x,y,z).
\end{array}
\right.
\end{equation*}
\begin{itemize}
\item $u(x,y,z)=-0.083339x-3.5413y-0.33868z-0.4325x^3$.
\item $V(x,y,z)=0.01(x^2+z^2)+0.0333z^2+0.0033xy+0.0048xz+0.0061yz$.
\end{itemize}
\end{example}

\begin{example}(See~\cite{Yeom2012})
\begin{equation*}
\left\{
\begin{array}{ll}
\dot{x} = z^3-y. \\
\dot{y} =z.\\
\dot{z} =u(x,y,z).
\end{array}
\right.
\end{equation*}
\begin{itemize}
\item $u(x,y,z)=-0.86597x-0.16208y-0.61597z$.
\item $V(x,y,z)=0.01(x^2+z^2)+0.0333z^2+0.0179xy+0.0129xz+0.0127y^2$.
\end{itemize}
\end{example}

\begin{example}
\begin{equation*}
\left\{
\begin{array}{ll}
\dot{x} = y. \\
\dot{y} =-0.1y-10z+xv^2.\\
\dot{z} =v.\\
\dot{v} =-z-v+u(x,y,z,v).\\
\end{array}
\right.
\end{equation*}
\begin{itemize}
\item $u(x,y,z,v)=-12.0271x-8.1243y-10.2755z-10.047v$.
\item $V(x,y,z,v)=0.1202x^2+0.01(y^2+v^2)+0.2201z^2+0.2556xz+0.0101xv+0.01578yz+0.0115yv$.
 \end{itemize}
\end{example}

\begin{example}(Ball and Beam example~\cite{Raja})
\begin{equation*}
\left\{
\begin{array}{ll}
\dot{x} = y. \\
\dot{y} =-9.8z+1.6z^3+xv^2.\\
\dot{z} =v.\\
\dot{v} =u(x).\\
\end{array}
\right.
\end{equation*}
\begin{itemize}
\item $u(x)=-6x$.
\item $V(x,y,z,v)=0.0672x^2+0.01y^2+0.1074z^2+0.0136v^2-0.0043xy+0.149xz+0.0023xv+0.008yz+0.0189yv-0.003zv$.
 \end{itemize}
\end{example}
\begin{example}
\begin{equation*}
\left\{
\begin{array}{ll}
\dot{x} = -0.1x^2-0.4xv-x+y+3z+0.5v. \\
\dot{y} =y^2-0.5yw+x+z.\\
\dot{z} =0.5z^2+x-y+2z+0.1v-0.5w.\\
\dot{v} =y+2z+0.1v-0.2w+u(x,y,z,v,w).\\
\dot{w} =z-0.1v+u(x,y,z,v,w).
\end{array}
\right.
\end{equation*}
\begin{itemize}
\item $u(x,y,z,v,w)=-1.5x-1.5y-1.5z-1.5v-1.5w.$ 
\item $V(x,y,z,w,v)=0.01(x^2+y^2+v^2+w^2)-0.0066xy-0.0252xz-0.008(xv+yv)+0.005xw+0.001yz+0.0167yw-0.0023zv-0.0121zw+0.001vw$.
\end{itemize} 
\end{example}

\section{Conclusion}
In this paper a linear programming approach is presented allowing to deal with the stabilization problem of polynomial systems.
The approach is based on Bernstein polynomials and propose a policy iteration technique allowing to avoid bilinear programs by
having an iterative approach of linear programs instead. The benchmarks results show that the method can be efficient in  practice. 
The drawback of this technique is that no convergence result is guaranteed and even in case of convergence there is no guaranty that it will be to a local minima. 
A future work will be a deeper study of the failure case or the fix point (once the algorithm result does not improve): an idea is to fix small variation for each variable of the
bilinear program and try to find a descent direction helping the algorithm to improve.
%To conclude, we have examined three different LP relaxations for
%synthesizing polynomial Lyapunov functions for polynomial systems. We
%compare these approaches to the standard approaches using
%Schm{\"u}dgen and Putinar representations that are used in SOS
%programming relaxations of the problem. In theory, the Schm{\"u}dgen
%representation approach subsumes the three LP relaxations. In
%practice, however, we are forced to use the Putinar representation. We
%show that the Putinar representation can prove some polynomials
%positive semi-definite that our approaches fail to. On the other hand,
%the reverse is also true: we demonstrate a polynomial that is easily
%shown to be positive semi-definite on the interval $[-1,1]^n$ through
%LP relaxations. However, the same fact cannot be demonstrated by a
%Putinar representation approach. We then compare both approaches over
%a set of numerical benchmarks. We find that the LP relaxations succeed
%in finding Lyapunov functions for all cases, while the Putinar
%representation fails in many benchmarks due to numerical (conditioning)
%issues while solving the SDP. As future work, we wish to extend our
%approach to a larger class of Lyapunov functions.  We also are looking
%into the problem of analyzing systems with non-polynomial dynamics and
%the synthesis of non-polynomial Lyapunov functions.

%%\bibliographystyle{plain}
%\bibliographystyle{plainnat}
%\bibliography{biblyap}
%%%\newpage
%\appendix
%\section{Description of Synthesized Benchmarks}\label{App:description}
%\input{appendix1.tex}

\end{document}